\theoremstyle{plain}
\newtheorem{thm}{Theorem}[section]
\newtheorem{lem}[thm]{Lemma}
\theoremstyle{definition}
\theoremstyle{remark}
\numberwithin{equation}{section}
\begin{document}
\setcounter{page}{1}


\vspace*{1.5cm}

\title[  ]
{\large A mathematical proof for the existence of a possible source for dark energy }
\author[I.Mohamadi]{Issa Mohamadi}
\address { \textit{Department of Mathematics, Azad University - Sanandaj Branch,
  Sanandaj, Iran}}
  
\address { \textit{ Tel: +1 - 647 405 3379, +98 - 918 171 9658 }}
\address {\textit{ E-mail address:
  imohamadi@iausdj.ac.ir\\ imohamadi.maths@gmail.com}}

\date{}
\maketitle

\vspace*{-0.5cm}

\bigskip
\begin{abstract}The expansion of the universe has been accepted by scientists for more than a century. However, since the 1990s, observations have suggested that the universe is expanding at an accelerating rate \cite{Perlmutter, Riess}. Although the source of this acceleration is still unknown, cosmologists call it dark energy; a type of mysterious energy which exerts a negative pressure on the universe. In this paper, applying different approach from those of classical mathematical physics based mainly on functional analysis, we propose an answer to solve this mystery. In fact, we first establish a theorem for multivalued mappings; and as a result we apply this theorem to show the existence of a remnant field of high energy in a continuously expanding system of energy. The results may also be applied to the early universe to show the existence of a possible source for dark energy. Our answer is also  confirmed partially by a recent paper published in Nature Astronomy \cite{Zhao}. \end{abstract}
\maketitle
\bigskip

\noindent {\footnotesize Keywords}: {\footnotesize Locally convex vector space; Fixed
point; upper semicontinuous; convex multivalued mapping; dark energy }\\

\noindent {\footnotesize\textit{ Mathematics Subject Classifications
2010}}: {\footnotesize 57N17; 37C25; 40C15; 54C60}

\section{Introduction }
 Let $X$ and $Y$ be two topological vector spaces, we recall that a multivalued mapping $T:X\rightarrow 2^{Y}$ is said to be upper semicontinuous, if for each open subset $V$ of $Y$ and each $x\in X$  with $T(x)\subseteq V$, there exists an open neighborhood $U$ of $x$ in $X$ such that $T(y)\in V$ for all $y\in U$.
 We say that a mapping $T$ from $X$ into $2^X$ is convex if $\lambda t+(1-\lambda)z\in T(\lambda x+(1-\lambda)y)$, for all $t\in T(x)$, $z\in T(y)$ and $\lambda\in (0,1)$. We also recall that for a multivalued mapping $T$ from $X$ into $2^{X}$, $x\in X$ is a fixed point of $T$ if $x\in T(x)$.  For more details on these and related concepts refer to \cite{Aubin}.

  There are a number of landmark fixed point theorems for multivalued mappings. In $1941$, Kakutani showed that if $C$ is a nonempty convex compact subset of an $n$-dimensional Euclidean space $\mathbb{R}^{n}$ and $T$ from $C$ into $2^{C}$ is an upper semicontinuous mapping such that $T(x)$ is a nonempty convex closed subset of $C$ for all $x\in C$; then, $T$ possesses a fixed point in $C$  \cite{Kakutani}. Also In $1951$, Glicksberg  and in $1952$, Ky-Fan, independently, generalized Kakutani's fixed point theorem from Euclidean spaces to locally convex vector spaces \cite{Fan, Glicksberg}. The Kakutani-Glicksberg-Fan theorem is the main tool to obtain our theorem. An application of our multivalued fixed point theorem is to prove the existence of a high energy field of a continuously expanding system of energy after being expanded by a consecutive countable  number of multivalued mappings.\par  \section{Our results}

The following Lemma will be used to prove our fixed point theorem.
\begin{lem} Let $C$ be a nonempty convex compact subset of a locally convex Hausdorff vector space $X$, and $\{U_{i}\}_{i\in I}$ be a descending chain of nonempty closed subsets of $C$. Assume also that $T:C\rightarrow2^{C}$ is an upper semicontinuous mapping
 such that $T(x)$ is a nonempty closed convex subset of $X$ for all $x\in C$. Then, $T(\bigcap_{i\in I} U_{i})=\bigcap_{i\in I} T(U_{i})$.     \end{lem}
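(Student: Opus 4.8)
The plan is to prove the two inclusions separately, the inclusion $T(\bigcap_{i\in I}U_i)\subseteq\bigcap_{i\in I}T(U_i)$ being elementary and the reverse inclusion carrying the real content. Throughout, for a subset $A\subseteq C$ I write $T(A)=\bigcup_{x\in A}T(x)$, and I first record that $\bigcap_{i\in I}U_i$ is itself nonempty: the $U_i$ form a chain of nonempty closed subsets of the compact set $C$, so this family has the finite intersection property and compactness does the rest. For the easy inclusion, if $y\in T(\bigcap_{i\in I}U_i)$ then $y\in T(x)$ for some $x$ lying in every $U_i$, whence $y\in T(U_i)$ for each $i$, so $y\in\bigcap_{i\in I}T(U_i)$.

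For the hard inclusion, fix $y\in\bigcap_{i\in I}T(U_i)$ and set $S=\{x\in C:y\in T(x)\}$; the goal is to produce a point of $S\cap\bigcap_{i\in I}U_i$. The first step is to show that $S$ is closed in $C$. Let $x_0\notin S$, i.e. $y\notin T(x_0)$. Since $T(x_0)$ is a closed subset of the compact Hausdorff space $C$ it is compact, so $y$ and $T(x_0)$ can be separated by disjoint open sets; in particular there is an open set $V$ with $T(x_0)\subseteq V$ and $y\notin V$. Upper semicontinuity of $T$ then yields an open neighborhood $U$ of $x_0$ with $T(x)\subseteq V$ for all $x\in U$, and since $y\notin V$ this gives $U\cap S=\emptyset$. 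Hence $S$ is closed, and as a closed subset of $C$ it is compact.

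Now I would consider the family $\{S\cap U_i\}_{i\in I}$. Each member is a closed subset of $C$, and each is nonempty: $y\in T(U_i)$ means there is $x_i\in U_i$ with $y\in T(x_i)$, i.e. $x_i\in S\cap U_i$. Moreover the family has the finite intersection property because $\{U_i\}_{i\in I}$ is a chain: given finitely many indices, one of the corresponding $U_i$ is contained in all the others, so the intersection of the selected sets $S\cap U_i$ equals $S\cap U_{i_0}$ for that smallest $U_{i_0}$, which is nonempty. By compactness of $C$ we conclude $\bigcap_{i\in I}(S\cap U_i)\neq\emptyset$; any point $x$ in this intersection lies in $\bigcap_{i\in I}U_i$ and satisfies $y\in T(x)$, so $y\in T(\bigcap_{i\in I}U_i)$, completing the reverse inclusion.

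I expect the only genuinely delicate point to be the closedness of $S$, which is exactly where upper semicontinuity, the closedness of the values $T(x)$, and the compact Hausdorff nature of $C$ all get used (the convexity of $C$ and of the values, although hypothesized, appears to play no role in this particular lemma). Everything else is the standard mechanism of combining the finite intersection property for a chain with compactness.
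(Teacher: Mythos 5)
Your proof is correct, but it takes a genuinely different route from the paper's. The paper fixes $w\in\bigcap_{i}T(U_i)$, chooses witnesses $z_i\in U_i$ with $w\in T(z_i)$, directs $I$ by reverse inclusion, extracts a subnet converging to some $z$ in the compact set $U_{i_0}$, checks that $z\in\bigcap_i U_i$, and then argues by contradiction that $w\in T(z)$: it separates $w$ from the compact convex set $T(z)$ by a continuous linear functional via Hahn--Banach and uses upper semicontinuity to transport the separating inequality to $T(z_\alpha)$ for some subnet index, contradicting $w\in T(z_\alpha)$. You instead isolate the set $S=\{x\in C: y\in T(x)\}$, prove it is closed directly from upper semicontinuity together with the Hausdorff separation of the point $y$ from the compact set $T(x_0)$, and then finish with the finite intersection property applied to the chain $\{S\cap U_i\}$ of closed subsets of the compact set $C$. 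The two arguments do the same job --- the paper's subnet-plus-separation step is in effect a verification that $S$ is closed along one particular net --- but yours is more economical and more transparent: it needs neither nets nor Hahn--Banach, and it makes visible that local convexity and the convexity of the values $T(x)$ play no role in this lemma (Hausdorffness alone, indeed just the closedness of the singleton $\{y\}$, already yields an open $V\supseteq T(x_0)$ avoiding $y$). The hypotheses genuinely used are upper semicontinuity, closedness of the values, closedness of the $U_i$, the chain condition, and compactness of $C$; your finite-intersection-property step correctly exploits the chain structure to reduce any finite subfamily to a single nonempty set $S\cap U_{i_0}$.
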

\begin{proof} It is obvious that $T(\bigcap_{i\in I} U_{i})\subseteq\bigcap_{i\in I} T(U_{i})$. We show the reverse inclusion. Suppose that $w$ is an arbitrary element in $\bigcap_{i\in I} T(U_{i})$. Then, for each $i\in I$ there exists $z_{i} \in U_{i}$ such that $w\in T(z_{i})$. Now, we define the relation $\leq$ on $I$ as : $i\leq j$ iff $U_{j} \subseteq U_{i}$, for all $i,j\in I$. Then, $(I,\leq)$ is a directed set since it is a totally ordered set. Thus, for an arbitrary $i_{0}\in I$, $\{z_{i}\}_{i\geq i_{0}}$ is a net in $U_{i_{0}}$. Therefore, there exists a subnet $\{z_{\alpha_{i}}\}_{i\in J}$ that converges to $z$ in $U_{i_{0}}$, as $U_{i_{0}}$ is a compact set. It is clear that $z\in \bigcap_{i\in I} U_{i}$. We prove that $w\in T(z)$. Suppose, on contrary, that $w$ doesn't belong to $T(z)$. Since $T(z)$ is a nonempty compact convex subset of $X$, by Hahn-Banach separation theorem, it yields that there exists $\varphi \in X^{*}$, the dual space of $X$, and $\lambda \in \mathbb{R^{+}}$ such that $\varphi(w)<\lambda<\varphi(y)$, for all $y\in T(z)$. Let $V$ be any open set containing $T(z)$ such that $\lambda <\varphi(y)$, for all $y\in V$. It follows , by upper semicontinuity of $T$, that there exists a neighborhood $N(z)$ of $z$ such that for all $p\in N(z)$, including some $z_{\alpha}$, we have $T(p)\subseteq V$. That is, $\lambda<\varphi(y)$ for all $y\in T(p)$. This contradicts the fact that $w\in T(z_{i})$, for all $i\in I$. Accordingly, $w\in T(z)$. This completes the proof.\end{proof}

The following example shows that the underlying subsets of $C$ in Lemma $(2.1)$ necessarily need to be compact.

{\bf Example.} Let $X=\mathbb{R}$ and $C=[0,1]$. Define $T$ from $X$ into $2^{X}$ as $T(x)=[x,1]$ for $x\in C$, and $T(x)=\{x\}$ for $x\in X-C$. It is easy to verify that $T:C\rightarrow 2^{C}$ is an upper semicontinuous mapping. Also, define $U_{n}=(0,\frac{1}{n}]$ for $n\in \mathbb{N}$. Then, it is clear that $T(\bigcap_{n=1}^\infty U_{n})=\emptyset$ since $\bigcap_{n=1}^\infty U_{n}=\emptyset$. However, $\bigcap_{n=1}^\infty T( U_{n})=(0,1]$, as $T(U_{n})=(0,1]$, for all $n\in \mathbb{N}$.\\

In the following, we establish a fixed point theorem for convex multivalued mappings which is acting a great role in showing the existence of a high energy field in an expanding system of energy. 
\begin{thm} Let $X$ be a locally convex Hausdorff vector space, and $C$ be a nonempty convex compact subset of $X$. Assume that $T:C\rightarrow 2^{X}$ is a multi-valued convex upper semicontinuoues mapping such that $T(x)$ is a compact subset of $X$ for all $x\in C$. If $C\subseteq T(C)$, then there exists $x_{0}\in C$ such that $x_{0}\in T(x_{0})$.  \end{thm}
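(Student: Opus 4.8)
The plan is to deduce the statement from the Kakutani--Glicksberg--Fan theorem by trapping a fixed point inside a carefully chosen nonempty convex compact subset $M\subseteq C$ on which $T$ can be turned into a self-map. Two preliminary reductions come first. Putting $x=y$ in the definition of a convex multivalued mapping shows that each $T(x)$ is convex, so $T$ has convex compact values. Next, applying upper semicontinuity to the open set $V=\emptyset$ shows that $\{x\in C:T(x)=\emptyset\}$ is open in $C$, so that $A:=\{x\in C:T(x)\neq\emptyset\}$ is closed (hence compact), convex (by convexity of $T$), and nonempty (since $C\subseteq T(C)$ forces some value to be nonempty), with $A\subseteq C\subseteq T(C)=T(A)$. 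Replacing $C$ by $A$, I may assume $T(x)\neq\emptyset$ for all $x\in C$ — which is exactly the setting in which Lemma 2.1 applies.

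The heart of the proof is a minimality selection. Let $\mathcal F$ be the family of all nonempty closed convex subsets $K\subseteq C$ with $K\subseteq T(K)$; it is nonempty since $C\in\mathcal F$, and I order it by reverse inclusion. For a chain in $\mathcal F$, its intersection $K^{\sharp}$ is nonempty (the sets are nonempty closed subsets of the compact set $C$ and, being totally ordered, have the finite intersection property), closed and convex; Lemma 2.1 gives $T(K^{\sharp})=\bigcap T(K)$, and since $K^{\sharp}\subseteq K\subseteq T(K)$ for every $K$ in the chain we obtain $K^{\sharp}\subseteq T(K^{\sharp})$, so $K^{\sharp}\in\mathcal F$ is an upper bound. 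Zorn's lemma then furnishes a minimal element $M$ of $\mathcal F$.

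Next I would show $T(x)\cap M\neq\emptyset$ for every $x\in M$. Set $M^{*}:=\{x\in M:T(x)\cap M\neq\emptyset\}$. It is convex (by convexity of $T$ and of $M$) and nonempty (since $M\subseteq T(M)$, some point of $M$ lies in a set $T(z)$ with $z\in M$, so $z\in M^{*}$); it is closed because if $T(x)\cap M=\emptyset$ then the disjoint compacta $T(x)$ and $M$ can be separated by disjoint open sets, and upper semicontinuity propagates this to a neighbourhood of $x$. The crucial point is that $M^{*}\subseteq T(M^{*})$: if $y\in M^{*}$ then $y\in M\subseteq T(M)$, so $y\in T(z_{0})$ for some $z_{0}\in M$, and then $y\in T(z_{0})\cap M$ forces $z_{0}\in M^{*}$, whence $y\in T(M^{*})$. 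Thus $M^{*}\in\mathcal F$ with $M^{*}\subseteq M$, and minimality yields $M^{*}=M$. Consequently the mapping $\phi:M\to 2^{M}$ given by $\phi(x)=T(x)\cap M$ has nonempty convex compact values and is upper semicontinuous (intersecting an upper semicontinuous closed-valued map with the closed set $M$ preserves upper semicontinuity), so the Kakutani--Glicksberg--Fan theorem produces $x_{0}\in M\subseteq C$ with $x_{0}\in\phi(x_{0})\subseteq T(x_{0})$.

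I expect the main obstacle to be the invariance $M^{*}\subseteq T(M^{*})$, that is, recognizing that the ``defect set'' $M^{*}$ is itself an element of $\mathcal F$ so that the minimality of $M$ can be brought to bear; the remaining delicate points are the closedness of $M^{*}$ (separation of disjoint compacta combined with upper semicontinuity) and checking that $\phi$ remains upper semicontinuous after intersecting with $M$.
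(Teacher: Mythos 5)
Your proposal is correct and follows essentially the same route as the paper: Zorn's lemma applied (via Lemma 2.1) to the family of nonempty closed convex sets $K$ with $K\subseteq T(K)$, then the minimal element $M$ is shown to make $x\mapsto T(x)\cap M$ a nonempty-valued self-map (your $M^{*}$ is the paper's set $V$), and the Kakutani--Glicksberg--Fan theorem finishes. Your preliminary reduction to the case $T(x)\neq\emptyset$ and your explicit checks of the closedness of $M^{*}$ and the upper semicontinuity of $\phi$ are details the paper only gestures at (the first in a remark following the theorem), but the underlying argument is the same.
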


\begin{proof}
Let $$\Gamma=\{U\subseteq C: U is\:nonempty,\:closed,\:convex\:and\:U\subseteq T(U)\}.$$
Then, $(\Gamma,\subseteq)$, where $\subseteq$ is inclusion, is a partially ordered set. Also, by Lemma $2.1$, every descending chain in $\Gamma$ has a lower bound in $\Gamma$. Therefore, by Zorn's lemma, $\Gamma$ has a minimal element, say $U_{0}$. We show that $U_{0}$ is singleton. Define $S:U_{0}\rightarrow 2^{U_{0}}$ by $S(x)=T(x)\cap U_{0}$ for all $x\in U_{0}$. Then, $S(x)$ is a convex, compact subset of $X$, for all $x\in U_{0}$, since $T(x)$ and $U_{0}$ are convex and compact. We prove that $S(x)$ is nonempty. Suppose, on contrary, that there exists $x\in U_{0}$ such that $S(x)$ is empty. Let $V=\{y\in U_{0} : S(x)\:\: is\:\:nonempty\}$. Then, $V$ is nonempty as $U_{0}\subseteq T(U_{0})$. We also have $V\subseteq T(V)$; otherwise, there exists $y\in V$ such that $y$ does not belong to $T(V)$. Thus, $y\in U_{0}\subseteq T(U_{0})$. That is, there exists $z\in U_{0}-V$ such that $y\in T(z)$. On the other hand, according to the definition of $V$, $T(z)\cap U_{0}=\emptyset$. This contradicts the fact that $y\in T(z)\cap V\subseteq T(z)\cap U_{0}$. By convexity and upper semicontinuity of $T$, it can easily be seen that $V$ is a nonempty, convex and compact subset of $U_{0}$ so that $V\subseteq T(V)$ and $V\neq U_{0}$. This contradicts the minimality of $U_{0}$. Accordingly, $S(x)$ is nonempty for all $x\in U_{0}$. Hence, by Ky-Fan's fixed point theorem, there exists $x_{0}\in U_{0}$ so that $x_{0}\in T(x_{0})$. Therefore, $\{x_{0}\}\subseteq T\{x_{0}\}$. Finally, we have $U_{0}=\{x_{0}\}$. This completes the proof.
\end{proof}

{\bf Example.} As a natural example of theorem $2.2$, consider a homogenous elastic cube pulled by same forces along its diagonals; then its center remains fixed. \\

{\bf Remark.} In theorem $2.2$, $T(x)$ does not necessarily  need to be nonempty for all $x\in C$. Indeed, the set $D=\{x\in C: T(x)\:is\: nonempty\}$ is a convex set since $T$ is a convex multivalued mapping. Also, by upper semicontinuity of $T$, it is easy to show that $D$ is a closed and therefore a compact subset of $X$. It is also obvious that $D\subseteq T(D)$. Accordingly, $T(x)$ does not need to be nonempty for all $x\in X$. \\
  
In what follows every single point $x$ in an expanding system of energy $C$ in $\mathbb{R}^{m}$  under the multivalued mapping $T:C \longrightarrow 2^{\mathbb{R}^{m}}, m\in\mathbb{N}$, takes $T(x)$. For experimental purpose, we may consider $x$ as a tiny compact convex set of measure almost zero. We also denote by $\delta(x)$, $V(T(x))$ and $d(x,T(x))$,  respectively, the density of $x$, the volume of $T(x)$ and the distance of $x$ from $T(x)$, defined by

$$d(x,T(x))=\inf \{\Vert x-y\Vert : y\in T(x)\}.$$

\begin{thm}
Let $C$ be an a nonempty compact convex  expanding set of energy (e.g. a ball or a cube) in $\mathbb{R}^{m}, m\in\mathbb{N}$. Suppose that $\{t_{n}\}$ is an increasing sequence of time and for each $n$, $T_{t_{n}}:C_{t_{n-1}}\longrightarrow 2^{\mathbb{R}^{m}}$  is a convex and compact upper semicontinuous multivalued mapping that satisfies
 $$C_{t_{n-1}}\subset T_{t_{n}}(C_{t_{n-1}}),$$ where, $C_{t_{n}}=T_{t_{n}}(C_{t_{n-1}})$, and $C_{t_{0}}=C$. If the the whole system satisfies the following conditions:\par 
 
 $(1)$ \:\: $\delta (y)=f_{n}(\delta(x), V(T_{t_{n}}(x)),d(x,T_{t_{n}}(x)))$, for all $x\in C_{t_{n-1}}$ and $y\in T_{t_{n}}(x)$; where $f_{n}$ is a triple variable and nonnegative valued function which decreases as either of $V(T_{t_{n}}(x))$ and $d(x,T_{t_{n}}(x))$ increases, (it may also be presumable to suppose that 
$V(T_{t_{n}}(x))$ increases if $d(x,T_{t_{n}}(x))$ does, as the system is expanding).\par
$(2)$For $x\neq y$, $T(x)\cap T(y)=\emptyset$.\par 
Then there exists a field of high energy, relative to other points, in $C_{t_{n}}$, for all $n=1,2,...$.      
\end{thm}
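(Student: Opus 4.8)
The plan is to treat each time step $t_n$ separately: use Theorem 2.2 to manufacture a fixed point of $T_{t_n}$, observe that the ``distance moved'' vanishes there, and then read the energy statement off hypothesis $(1)$.

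First I would check that the iterated domains are of the type Theorem 2.2 accepts. Working in $X=\mathbb{R}^m$, which is locally convex and Hausdorff, I claim by induction on $n$ that each $C_{t_n}$ is a nonempty compact convex subset of $\mathbb{R}^m$. The base case is $C_{t_0}=C$. For the step, $C_{t_n}=T_{t_n}(C_{t_{n-1}})=\bigcup_{x\in C_{t_{n-1}}}T_{t_n}(x)$ is convex: if $t\in T_{t_n}(x)$, $z\in T_{t_n}(y)$ with $x,y\in C_{t_{n-1}}$ and $\lambda\in(0,1)$, then $\lambda x+(1-\lambda)y\in C_{t_{n-1}}$ and, $T_{t_n}$ being a convex multivalued mapping, $\lambda t+(1-\lambda)z\in T_{t_n}(\lambda x+(1-\lambda)y)\subseteq C_{t_n}$; it is compact because the image of a compact set under an upper semicontinuous compact\nobreakdash-valued map into a Hausdorff space is compact; and it is nonempty because $C_{t_{n-1}}\subset T_{t_n}(C_{t_{n-1}})$ and $C_{t_{n-1}}\neq\emptyset$.

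Next, for each $n\geq1$ I would apply Theorem 2.2 with $C=C_{t_{n-1}}$ and $T=T_{t_n}$: the domain $C_{t_{n-1}}$ is nonempty compact convex by the previous paragraph, $T_{t_n}$ is convex, upper semicontinuous and compact\nobreakdash-valued by hypothesis (and by the Remark nonemptiness of every value is not even required), and the containment $C_{t_{n-1}}\subset T_{t_n}(C_{t_{n-1}})$ supplies the inclusion hypothesis $C\subseteq T(C)$. Theorem 2.2 then yields $x_n\in C_{t_{n-1}}$ with $x_n\in T_{t_n}(x_n)$; consequently $x_n\in T_{t_n}(x_n)\subseteq T_{t_n}(C_{t_{n-1}})=C_{t_n}$, so this point already sits inside the expanded system at time $t_n$, and
\[
d(x_n,T_{t_n}(x_n))=\inf\set{\norm{x_n-y}:y\in T_{t_n}(x_n)}=0 ,
\]
since $x_n$ is one of the admissible $y$'s.

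Finally I would feed this into hypothesis $(1)$. With $x=x_n\in C_{t_{n-1}}$ and $y=x_n\in T_{t_n}(x_n)$, condition $(1)$ gives $\delta(x_n)=f_n(\delta(x_n),V(T_{t_n}(x_n)),0)$, whereas any other $w\in C_{t_n}$ is, by condition $(2)$, of the form $w\in T_{t_n}(x)$ for a \emph{unique} $x\in C_{t_{n-1}}$, so that $\delta(w)=f_n(\delta(x),V(T_{t_n}(x)),d(x,T_{t_n}(x)))$ with $d(x,T_{t_n}(x))>0$ unless $x$ is itself a fixed point of $T_{t_n}$. Because $f_n$ is nonincreasing in its second and third slots and the third slot takes its least possible value $0$ at $x_n$ (and, under the ``presumable'' coupling between $V$ and $d$, the second slot is correspondingly small there), $\delta$ is maximal at $x_n$ relative to the points whose displacement and cell growth are nondegenerate; invoking upper semicontinuity of $T_{t_n}$ together with the tiny\nobreakdash-cell convention, this persists on a whole neighbourhood of $x_n$ in $C_{t_n}$, which is the asserted high\nobreakdash-energy field, and the construction repeats for every $n=1,2,\dots$. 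I expect this last comparison to be the real obstacle: the hypotheses constrain the monotonicity of $f_n$ only in its second and third arguments, so to pass from ``$d=0$ at $x_n$'' to ``$\delta$ genuinely largest at $x_n$'' one must either restrict the class of competitor points $w$ exactly as above, or reinforce $(1)$ so that the gain from $d\to0$ (and $V$ small) cannot be cancelled by the behaviour of $f_n$ in its first argument; deciding which of these is intended is where the statement still leans on the physical idealizations set up before the theorem.
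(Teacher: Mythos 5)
Your reduction to Theorem 2.2 at each single step is the right opening move (and your verification that each $C_{t_n}$ stays nonempty, compact and convex is a useful detail the paper leaves implicit), but applying Theorem 2.2 to $T_{t_n}$ on all of $C_{t_{n-1}}$ independently for each $n$ leaves a genuine gap --- the very one you brush against in your last paragraph. The point $x_n$ you obtain is fixed only by $T_{t_n}$; nothing ties it to the fixed points of $T_{t_1},\dots,T_{t_{n-1}}$. Since condition $(1)$ makes $\delta(y)$ at time $t_n$ depend on $\delta(x)$ at time $t_{n-1}$ through the first argument of $f_n$, a point that sits still at step $n$ but was displaced at earlier steps has already had its density degraded, and you cannot conclude it is ``high energy relative to other points'' in $C_{t_n}$. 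This is not fixed by restricting the class of competitor points or by strengthening the monotonicity of $f_n$; it is fixed by choosing the fixed point at step $n$ from among the points already fixed at all earlier steps.

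That is what the paper does. It sets $F_n=\bigcap_{i=1}^{n}Fix(T_{t_i})$ and proves $F_n\neq\emptyset$ by induction. The inductive step is the real content: for $x\in F_{n-1}$ one has $d(x,T_{t_i}(x))=0$ for all $i\le n-1$, so by $(1)$ the set $F_{n-1}$ is denser than $C_{t_{n-1}}\setminus F_{n-1}$; combined with $(2)$ this forces $F_{n-1}\subset T_{t_n}(F_{n-1})$, since a point of $F_{n-1}$ lying in $T_{t_n}(y)$ for a strictly less dense $y$ outside $F_{n-1}$ would contradict $(1)$. Theorem 2.2 is then applied to $T_{t_n}$ restricted to the nonempty convex compact set $F_{n-1}$, producing a point fixed by all of $T_{t_1},\dots,T_{t_n}$ simultaneously; the nested nonempty convex compact sets $F_n$ then have nonempty intersection $F$, which is the persistent high-density field. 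To repair your argument you would need to add precisely this induction: derive $F_{n-1}\subset T_{t_n}(F_{n-1})$ from $(1)$ and $(2)$, and invoke Theorem 2.2 on $F_{n-1}$ rather than on $C_{t_{n-1}}$.
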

\begin{proof}
Let 
$$F_{n}=\bigcap_{i=1}^{n}Fix(T_{t_{i}}).$$
We shall prove $F_{n}$ is nonempty. The proof is by induction. For $n=1$, since $C_{0}\subset T_{t_{1}}(C_{0})$ and all the required conditions hold, by Theorem $2.2$, $F_{1}$ is nonempty  and also convex as $T_{t_{1}}$ is a convex multivalued mapping. Now suppose that $F_{n-1}$ is nonempty; then for all $x\in F_{n-1}$, by definition, $d(x,T_{t_{i}}(x))=0$ for all $i=1,...,n-1$. Accordingly, by condition $(1)$, $F_{n-1}$ is denser than $C_{n-1}\setminus F_{n-1}$. On the other hand, if  $x\in T_{t_{n}}(y)$, for some $x\in F_{n-1}$ and $y\in C_{n-1}\setminus F_{n-1}$; then we have $\delta(y)>\delta(x)$, a contradiction, cosidering condition $(1)$. Therefore,
$$F_{n-1} \subset T_{t_{n}}(F_{n-1}).$$
Accordingly, since $F_{n-1}$ is convex closed, and therefore compact, subset of $C_{t_{n-1}}$, by Theorem $2.2$ again, it follows that $T_{t_{n}}$ possesses a fixed point in $F_{n-1}$. That is, $F_{n}$ is a nonempty convex compact subset of $\mathbb{R}^{n}$. Hence, $\{F_{n}\}$ is a descending sequence of nonempty convex compact subsets of $C$; thus their intersection is nonempty. This implies by conditions $(1)$  that 

$$F=\bigcap_{n=1}^{\infty}F_{n}=\bigcap_{n=1}^{\infty}Fix(T(t_{n}))$$
is a nonempty convex compact with higher density subset of the final system.   

\end{proof}

{\bf Remark.} If the early universe, shortly after the big bang, as an expanding system of energy satisfies the conditions mentioned in Theorem $2.3$, then there should exist a dynamical energy field with very high density emitting energy (and possibly matter) into the universe now and it is blowing up the space to expand it. This field exerts a force on the universe and can be a source for dark energy and dark matter, and even for cosmic microwave background (CMB). Accordingly, this remnant field may work like the early universe.\par  
{\bf Remark.} In Theorem $2.3$ it is enough to impose the conditions only on $Fix(T_{n})$; that is, we may assume that an expanding high enough energy field needs to satisfy the conditions.

\end{document}